\newtheorem{assumption}{Assumption}
\newtheorem{definition}{Definition}
\newtheorem{lemma}{Lemma}
\newtheorem{theorem}{Theorem}
\DeclareMathOperator{\rank}{rank}
\DeclareMathOperator{\expval}{\mathbb{E}}
\newcommand{\doi}[1]{\href{http://dx.doi.org/#1}{\normalsize{\textsc{doi:}}~\nolinkurl{#1}}}
\newcommand{\arxiv}[1]{\href{http://arxiv.org/abs/#1}{\normalsize{\textsc{arxiv:}}~\nolinkurl{#1}}}
\renewcommand{\epsilon}{\varepsilon}
\renewcommand{\phi}{\varphi}
\newcommand{\R}{\mathbb{R}}
\newcommand{\N}{\mathbb{N}}
\let\originalleft\left
\let\originalright\right
\renewcommand{\left}{\mathopen{}\mathclose\bgroup\originalleft}
\renewcommand{\right}{\aftergroup\egroup\originalright}
\def\clap#1{\hbox to 0pt{\hss#1\hss}}
\newcommand{\norm}[1]{\left\lVert #1\right\rVert}
\newcommand{\set}[1]{\left\{ #1\right\}}
\DeclareMathAlphabet{\mathpzc}{OT1}{pzc}{m}{it}
\newcommand{\dfknote}[1]%
    {\textcolor{orange}{\textbf{DFK: #1}}}
\newcommand{\twnote}[1]%
    {\textcolor{cyan}{\textbf{TW: #1}}}
\newcommand{\emnote}[1]%
    {\textcolor{blue}{\textbf{EM: #1}}}
\title{Feedback Linearization for Uncertain Systems via Reinforcement Learning}
\author{
Tyler Westenbroek*, David Fridovich-Keil*, Eric Mazumdar*, Shreyas Arora, Valmik Prabhu,\\ S. Shankar Sastry, and Claire J. Tomlin
\thanks{
This work was supported by HICON-LEARN (design of HIgh CONfidence LEARNing-enabled systems), Defense Advanced Research Projects Agency award number FA8750-18-C-0101, and Provable High Confidence Human Robot Interactions, Office of Naval Research award number N00014-19-1-2066.
EECS, UC Berkeley. \href{mailto:westenbroekt@eecs.berkeley.edu}{\tt \small{westenbroekt@berkeley.edu}}.\newline
$^*$ indicates equal contribution.}
}
\begin{document}
\maketitle

\begin{abstract}
We present a novel approach to control design for nonlinear systems which leverages model-free policy optimization techniques to learn a linearizing controller for a physical plant with unknown dynamics. Feedback linearization is a technique from nonlinear control which renders the input-output dynamics of a nonlinear plant \emph{linear} under application of an appropriate feedback controller. Once a linearizing controller has been constructed, desired output trajectories for the nonlinear plant can be tracked using a variety of linear control techniques. However, the calculation of a linearizing controller requires a precise dynamics model for the system. As a result, model-based approaches for learning exact linearizing controllers generally require a simple, highly structured model of the system with easily identifiable parameters. In contrast, the model-free approach presented in this paper is able to approximate the linearizing controller for the plant using general function approximation architectures. Specifically, we formulate a continuous-time optimization problem over the parameters of a learned linearizing controller whose optima are the set of parameters which best linearize the plant. We derive conditions under which the learning problem is (strongly) convex and provide guarantees which ensure the true linearizing controller for the plant is recovered. We then discuss how model-free policy optimization algorithms can be used to solve a discrete-time approximation to the problem using data collected from the real-world plant. The utility of the framework is demonstrated in simulation and on a real-world robotic platform. 
\end{abstract}
\section{Introduction}
Geometric nonlinear control theory has developed a powerful set of feedback architectures which exploit the underlying structure of a control system to simplify downstream tasks such as trajectory generation and tracking \cite{sastry2013nonlinear,isidori2013nonlinear}. However, geometric controllers often require an accurate model for the system or a simple parameterization of the dynamics  which can be readily identified. Meanwhile, the model-free reinforcement learning literature \cite{bertsekas1996neuro, sutton2018reinforcement, sutton2000policy} has sought to automatically compute optimal feedback controllers for unknown systems without relying on structural  assumptions about the dynamics. However, despite a recent resurgence of research into these methods \cite{schulman2015trust, lillicrap2015continuous, schulman2017proximal}, their poor sample complexity has thus-far limited their applicability to many robotics applications. In this paper we unify these disparate approaches by using model-free reinforcement learning algorithms to optimize the performance of a class of geometric controllers for a plant with unknown dynamics, observing improvement in the performance of the trained controllers with practical amounts of data collects from the plant. 

Specifically, this paper focuses on a geometric technique called feedback linearization, which renders the input-output dynamics of a nonlinear system \emph{linear} under the application of an appropriately chosen feedback controller. Once a linearizing controller has been constructed, it can be used in conjunction with efficient tools from linear systems theory to generate and track desired output trajectories for the full nonlinear plant \cite{ martin2003flat, kalman1960contributions, borrelli2017predictive}. Despite its widespread use throughout robotics \cite{grizzle2001asymptotically, ames2014rapidly, mellinger2011minimum, sastry2013nonlinear, isidori2013nonlinear,} the primary drawback of feedback linearization is that exact cancellation of the system's nonlinearities requires a precise model of the dynamics. Complex phenomena such as friction, aerodynamic drag, and internal actuator dynamics yield nonlinearities which may be challenging to model or identify. While there have been extensive efforts to construct exact linearizing controllers for unknown plants using extensions of adaptive linear control theory \cite{sastry2011adaptive, craig1987adaptive, sastry1989adaptive, nam1988model, kanellakopoulos1991systematic, umlauft2017feedback, chowdhary2014bayesian, chowdhary2013bayesian}, these methods require a highly structured representation of the system's nonlinearites and employ complicated parameter update schemes to avoid singularities in the learned controller. A more detailed discussion of these methods is provided in Section \ref{sec:literature}.

\begin{figure}[t!]
    \centering
    \includegraphics[trim=6cm 7cm 6cm 7cm, clip=true, width=\columnwidth]{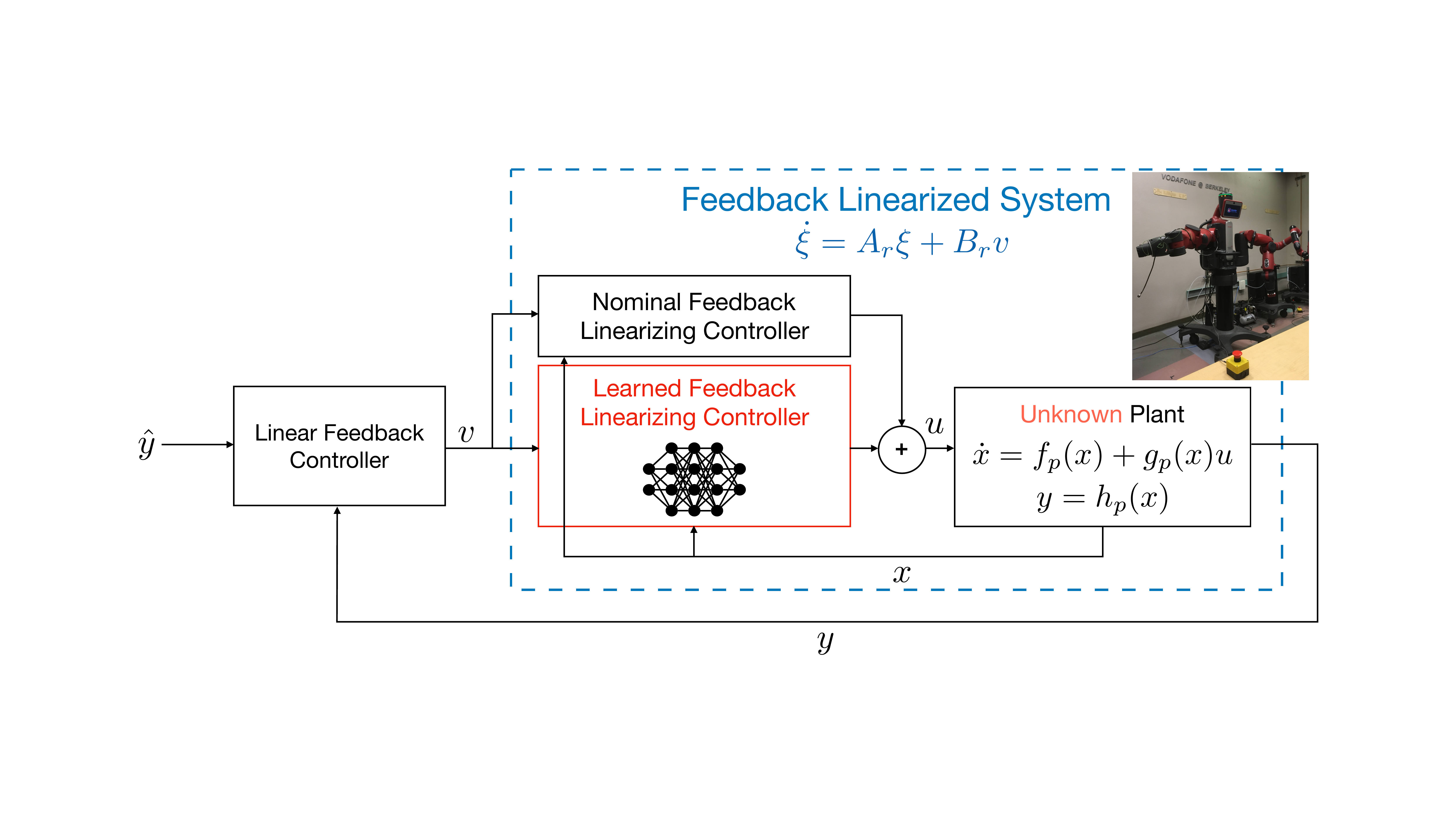}
    \caption{Schematic diagram of our framework. By learning an appropriate feedback linearizing controller, we render an initially unknown nonlinear system (with state $x$, output $y$, and input $u$) \emph{linear} in an auxiliary input $v$. The framework can make use of a nominal dynamics model (if available).}
    \label{fig:front}
    \vspace{-0.75cm}
\end{figure}

In sharp contrast to the standard methods discussed above, we propose a general framework for learning a linearizing controller for a plant with unknown dynamics using model-free policy optimization techniques. Our approach requires only that the order of the relationship between the inputs and outputs is known, can utilize arbitrary function approximation schemes, and remains singularity free during training. Concretely, our approach begins by constructing a linearizing controller for an approximate dynamics model of the plant (if available), and then augments this fixed model-based controller with a learned affine correction term. We then define a continuous-time optimization problem over the learned parameters which selects parameters which better linearize the unknown plant. We provide conditions on the structure of the learned component which  ensure that the learning problem is (strongly) convex. Our analysis draws connections between the familiar persistency-of-excitation conditions from the adaptive control literature and our formulation. Finally, we discuss how off-the-shelf reinforcement learning algorithms can be used to solve discrete-time approximations to the optimization problem, and demonstrate the utility of our framework by learning linearizing controllers for highly uncertain robotic systems in both simulation and on real-world hardware. 

The rest of the paper is structured as follows. Section \ref{sec:FBL} introduces feedback linearization and discusses prior approaches for learning linearizing controllers. Section \ref{sec:learning} details our approach, provides our theoretical results, and discusses practical algorithms for solving the learning problem. Our simulated and real-world robotic examples are presented in Section \ref{sec:examples}, and Section \ref{sec:discussion} provides closing remarks and discusses   future work.\section{Feedback Linearization} \label{sec:FBL}

This section outlines how to compute an input-output linearizing controller for a known dynamics model and discusses prior data-driven methods for learning a linearizing controller for an unknown plant.  Due to space constraints, we refer the interested reader to \cite[Chapter 9]{sastry2013nonlinear} for a more complete introduction to feedback linearization. 
 In this paper, we consider square control-affine systems of the form
\begin{equation}
\label{eq:mimo}
\begin{aligned}
    \dot{x} &= f(x) + g(x)u \\
    y &= h(x),
\end{aligned}
\end{equation}
with sate $x \in \R^n$, input $u \in \R^q$  and output $y \in \R^q$. The mappings $f\colon \R^n \to \R^n$, $g \colon \R^n \to \R^{n \times q}$ and $h \colon \R^n \to \R^q$ are each assumed to be smooth. We restrict our attention to a compact subset $D \subset \R^n$ of the state-space.
\subsection{Single-input single-output systems}
We begin by introducing feedback linearization for single-input, single-output (SISO) systems (i.e.,  $q =1 $). We begin by examining the first time derivative of the output:
\begin{align*}\label{eq:nominal}
\dot{y} &= \frac{dh}{dx}(x) \cdot \Big(f(x) + g(x) u\Big)   \\
&= \underbrace{\frac{dh}{dx}(x) \cdot f(x)}_{L_{f}h(x)} + \underbrace{\frac{dh}{dx}(x) \cdot g(x)}_{L_{g} h(x)}u
\end{align*}
Here the terms $L_{f}h(x)$ and $L_{g}h(x)$ are known as \emph{Lie derivatives} \cite{sastry2013nonlinear}, and capture the rate of change of $y= h(x)$ along the vector fields $f$ and $g$, respectively. In the case that $L_{g}h(x) \neq0$ for each $x \in D$, we can apply the control law
\begin{equation}\label{eq:fb1}
    u(x,v) = \frac{1}{L_{g}h(x)} (-L_{f}h(x) + v)~,
\end{equation}
which exactly `cancels out' the nonlinearities of the system and enforces the linear relationship $\dot{y} = v$. After application of this linearizing control law, we can now control the output trajectory $y(t)$ through its first derivative. However if the input does not affect the first time derivative of the output {(that is, if $L_gh \equiv 0$)} then the control law \eqref{eq:fb1} will be undefined. In general, we can differentiate $y$ multiple times until the input appears. Assuming that the input does not appear the first $\gamma-1$ times we differentiate the output, the $\gamma$-th time derivative of $y$ will be of the form
\begin{equation*}
    y^{(\gamma)} = L_f^{\gamma}h(x) + L_gL_f^{\gamma-1}h(x)u.
\end{equation*}
Here, $L_f^{\gamma}h(x)$ and $L_gL_f^{\gamma-1}h(x)$ are higher order Lie derivatives (see \cite[Chapter 9]{sastry2013nonlinear} for more details). If $L_gL_f^{\gamma-1}h(x) \neq0$ for each $x \in D$ then the control law 
\begin{equation*}
    u(x,v) = \frac{1}{L_gL_f^{\gamma -1}h(x)}\big(-L_f^{\gamma}h(x) + v\big)
\end{equation*}
enforces the trivial linear relationship $y^{(\gamma)} = v$. We refer to $\gamma$ as the \emph{relative degree} of the nonlinear system, which is simply the order of its input-output relationship. 

\subsection{Multiple-input multiple-output systems}
Next, we consider (square) multiple-input, multiple-output (MIMO) systems where $q > 1$.  As in the SISO case,  we differentiate each of the output channels until at least one input appears.  Let $\gamma_j$ be the number of times we need to differentiate $y_j$ (the $j$-th entry of $y$) for at least one input to appear. Combining the resulting expressions for each of the outputs yields an input-output relationship of the form
\begin{equation}
\label{eq:first_A_b}
   [y_1^{(\gamma_1)}, \dots, y_q^{(\gamma_q)}]^T = b(x) +  A(x) u.
\end{equation}
Here, the matrix $A(x) \in \R^{q \times q}$ is known as the \emph{decoupling matrix} and the vector $b(x) \in \mathbb{R}^q$ is known as the \emph{drift term}. If $A(x)$ is bounded away from singularity for each $x \in D$ then we observe that the control law
\begin{equation}\label{eq:mimo_controller}
    u(x,v) = A^{-1}(x)(-b(x) + v)
\end{equation}
where $v \in \R^q$ yields the decoupled linear system
\begin{equation}\label{eq:decoupled_sys}
    [
    y_1^{(\gamma_1)}, y_2^{(\gamma_2)}, \dots, y_q^{(\gamma_q)}]^T = [v_1, v_2, \dots, v_q]^T,
\end{equation}
where $v_j$ is the $j$-th entry of $v$ and $y_j^{(\gamma_j)}$ is the $\gamma_j$-th derivative of the $j$-th output. We refer to $\gamma =(\gamma_1, \gamma_2, \dots, \gamma_q)$ as the \emph{vector relative degree} of the system. The decoupled dynamics \eqref{eq:decoupled_sys} are LTI and can be compactly represented with the \emph{reference model}
\begin{equation}\label{eq:reference}
    \dot{\xi}_r = A\xi_r + Bv_r,
\end{equation}
where $\xi_r = (y_1, \dot{y}_1, \dots, \dots , y_1^{\gamma_1-1}, \dots, y_q, \dots, y_q^{\gamma_q-1})$ and $A \in \R^{\|\gamma\| \times\ |\gamma\|}$ and $B \in \R^{|\gamma| \times q}$ are dynamics matrices with appropriate entries. The reference model can then be used to construct a desired trajectory $\xi_r(\cdot)$ for the output of the nonlinear system (and its derivatives), and to construct linear feedback controllers which can be used in conjunction with \eqref{eq:mimo_controller} to track the reference (see \cite[Theorem 9.14]{sastry2013nonlinear} for details). 

\subsection{Constructing linearizing controllers with data}\label{sec:literature}
The fundamental challenge model-based methods face when constructing a linearizing controller for an unknown plant is that it is very difficult to identify an estimate for the system's decoupling matrix (or possibly its inverse) which is guaranteed to be invertible. The predominant approaches for learning linearizing controllers are founded on the linear model reference adaptive control (MRAC) literature \cite{sastry2011adaptive}, and seek to recursively improve an estimate for the true linearizing controller using data collected from the plant \cite{sastry2011adaptive, craig1987adaptive, sastry1989adaptive, nam1988model, kanellakopoulos1991systematic, spooner1996stable, chen1995adaptive, yesildirek1994feedback}. These methods update the parameters for estimates of the decoupling matrix (or its inverse) online, but require that the estimated matrix remain invertible at all times. Typically, projection-based update rules are used to keep the parameters of the estimated matrix in a region which is singularity-free. However, the construction of these bounds requires a highly accurate yet simple parameterization of the system, and assumes that the true parameters of the system lie within some nominal set. 

One alternative approach \cite{kosmatopoulos1999switching, kosmatopoulos2002robust, bechlioulis2008robust} is to perturb the estimated linearizing control law to avoid singularities. These methods enable the use of more general function approximation schemes but sacrifice some tracking performance. Recently, non-parametric function approximators have been been used to learn a linearizing controller \cite{umlauft2017feedback, umlauft2019feedback}, but these methods still require structural assumptions to avoid singularities. In the following section we use model-free policy optimization algorithms to update the paramters of a learned linearizing controller while avoiding singularities.

\section{Directly Learning a Linearizing Controller}\label{sec:learning}
Our goal is to learn a linearizing controller for the plant
\begin{align}
\label{eq:plant}
    \dot{x}_p &= f_p(x_p) + g_p(x_p)u_p \\ \nonumber
    y_p &= h_p(x_p) 
\end{align} 
which is unknown. Our approach can incorporate nominal model
\begin{align}\label{eq:model}
    \dot{x}_m &= f_m(x_m) + g_m(x_m)u_m \\ \nonumber
    y_m & = h_m(x_m)
\end{align}
which represents our "best guess" for the true dynamics of the plant. We will assume that the plant and model have well-defined relative degrees $(\gamma_1^p, \dots, \gamma_q^p)$ and $(\gamma_1^m, \dots, \gamma_q^m)$, respectively, on some chosen set $D \subset \mathbb{R}^n$. We make the following assumption about our plant and model:
\begin{assumption}\label{ass:relative_degree1}\vspace{0.3cm}
The model system \eqref{eq:model} and plant \eqref{eq:plant} have the same relative degree on $D$ in the sense that $(\gamma_1^p, \dots, \gamma_q^p) = (\gamma_1^m, \dots, \gamma_q^m)$.
\end{assumption}\vspace{0.3cm}

This is a rather mild assumption, as the order of the relationship between the inputs and outputs of the plant can usually be inferred from first principles, even without a perfect model for the dynamics of the system. For example, for Lagrangian systems, such as the manipulator arms we consider in Section \ref{sec:examples}, we know the torques applied by an actuator produce an acceleration in the joints, even if we don't know the exact relationship between the two quantities. Moreover, with this assumption in place, we know there are linearizing controllers of the form
\begin{align*}
      u_m(x,v) &= \beta_m(x) + \alpha_m(x)v\\
       u_p(x,v) &= \beta_p(x) + \alpha_p(x)v
\end{align*}
 for the model and plant, respectively, which match the input-output dynamics of both systems to a common reference model \eqref{eq:reference}. Here, $\beta_p(x), \beta_m(x) \in \mathbb{R}^q$ and $\alpha_p(x), \alpha_m(x) \in \mathbb{R}^{q \times q}$. While we do not know $u_p$ \emph{a priori}, we do know that 
 \begin{align*}
     \beta_p(x) &= \beta_m(x) + \Delta \beta(x)\\
     \alpha_p(x) &= \alpha_m(x) + \Delta \alpha(x)
 \end{align*}
for some continuous functions $\Delta \beta$ and $\Delta \alpha$. We construct the following parameterized estimates for these functions:
\begin{equation*}
 \Delta \beta(x) \approx \beta_{\theta_1}(x) \ \ \ \ \ \ ~\Delta \alpha(x) \approx \alpha_{\theta_2}(x) 
\end{equation*}
Here, $\theta_1 \in \Theta_1 \subset \R^{K_1}
_1$ and $\theta_2 \in \Theta_2 \subset \R^{K_2}$ are parameters to be trained by running experiments on the plant. We will assume that $\Theta_1$ and $\Theta_2$ are convex sets, and we will frequently abbreviate $\theta = (\theta_1, \theta_2) \in \Theta_1 \times \Theta_2  : = \Theta$. We assume that $\beta_{\theta_1}$ and $\alpha_{\theta_2}$ are continuous in $x$ and continuously differentiable in $\theta_1$ and $\theta_2$, respectively.

\begin{figure*}[h!]
\centering
\includegraphics[width=0.9\textwidth, trim=0.1cm 0cm 0.1cm 0cm, clip=true]{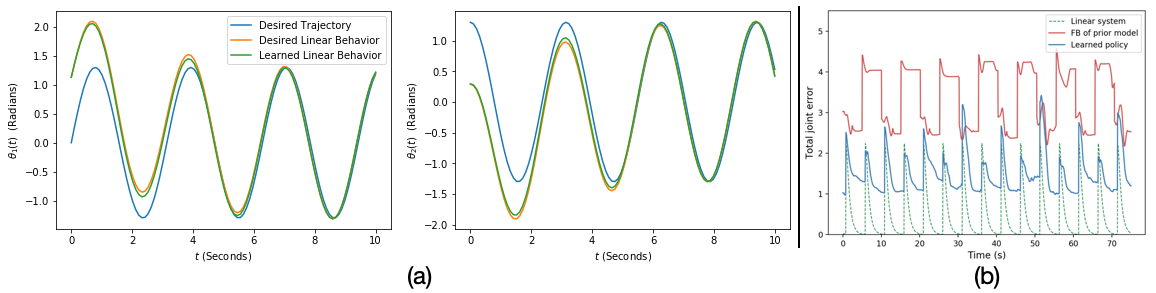}
\caption{(a) Depiction of the tracking task for the double pendulum. The blue curves represent the desired joint angles over time. The orange and green curves represent trajectories generated by the ideal feedback linearizing controller (perfect model information) and our learned policy. Both trajectories begin with an initial tracking error, but quickly converge to the desired motion.  (b) Total $\ell_2$ error on a set-point tracking task for the Baxter robot after 104 minutes of training for the desired linear system is (dotted, green), the nominal feedback linearizing controller (red), and our learned controller (blue)}
\label{fig:double_pendulum}
\vspace{-0.6cm}
\end{figure*}

Altogether, for a given $\theta =(\theta_1,\theta_2) \in \Theta$ our estimate for the controller which exactly linearizes the plant is given by
\begin{equation}\label{eq:learned_controller}
    \hat{u}_\theta(x,v) = [\beta_m(x) + \beta_{\theta_1}(x)] + [\alpha_{m}(x) + \alpha_{\theta_2}(x)  ] v
\end{equation}
When no prior information about the dynamics of the plant is available (other than its vector relative degree), we simply set $\beta_m \equiv 0$ and $\alpha_m \equiv 0$ in the above expression. Next we define an optimization problem which selects the parameters for the learned controller which best linearize the plant.

\subsection{Continuous-time optimization problem}\label{sec:continuous_opt}
From Section \ref{sec:FBL} we know that the input-output dynamics of the plant are of the form
\begin{equation}\label{eq:learned_output_dyn}
   y^{(\gamma)} = b_p(x) +  A_p(x)u
\end{equation}
where the terms $b_p$ and $A_p$ are unknown to us, and we have written the highest order derivatives of the outputs as $y^{(\gamma)} = (y_1^{(\gamma_1)} \dots ,y_q^{(\gamma_q)})^T$ to simplify notation. Under application of $\hat{u}_\theta$ the dynamics are given by:
\begin{equation}\label{eq:total_dynamics}
    y^{(\gamma)} = \underbrace{b_p(x) +  A_p(x) \hat{u}_\theta(x,v)}_{W_\theta(x,v) }
\end{equation}
Since our goal is to find parameters which linearize the plant, we want to find $\theta^* \in \Theta$ such that $W_{\theta^*}(x,v) \approx v$  for each  $x \in D$ and $v \in \R^q$. Thus, we define the point-wise loss $\ell \colon \R^n \times \R^q \times \R^{K_1 + K_2}\to \R$ by
\begin{equation}
    \ell(x,v, \theta) = \norm{v -W_\theta(x,v)}_2^2,
\end{equation}
which provides a measure of how well the learned controller $\hat{u}_\theta$ linearizes the plant at the state $x$ when the virtual input $v$ is applied to the linear reference model. Next, we specify a probability distribution $X$ over $\R^n$ with support $D \subset \mathbb{R}^n$ and let $V$ be the uniform distribution over the set $\set{v \in \R^q \colon \norm{v} \leq 1}$. We then define the weighted loss
\begin{equation}\label{eq:pointwise_loss}
    L(\theta) = \expval_{x \sim X, v \sim V} \ell(x,v,\theta)
\end{equation}
and select our optimal choice for the parameters of the learned controller via the following optimization:
 \begin{equation}\label{eq:continuous_opt}
   (\textbf{P}) \colon \  \min_{\theta \in \Theta} L(\theta)
\end{equation}

Here, the distribution $X$ models our preference for having an accurate linearizing controller at different points in the state-space, and the uniformity of $V$ ensures that optimal solutions of \textbf{P} accurately linearize the plant for all possible choices of the virtual input to the reference model. The primary challenge in solving \textbf{P} is that we do not know the terms in $W_\theta(x,v)$, since we do not know $A_p(x)$ and $b_p(x)$. However, using the relationship \eqref{eq:total_dynamics}, we can query $W_{\theta}(x,v)$ by measuring $y^{(\gamma)}$ when different inputs are applied to the plant. Thus, \textbf{P} can be solved by running experiments on the plant and using any stochastic optimization method which only requires access to the point-wise loss \eqref{eq:pointwise_loss}. While we focus on the use of policy-gradient reinforcement learning algorithms below, there are many possible model-free approaches for solving \textbf{P} which merit further investigation. Importantly, since we have abstracted away the need for a model when solving $\textbf{P}$, we can iteratively improve the performance of our learned controller without requiring that it remain invertible at each stage of the optimization. Next, we discuss when we can recover the true linearizing controller for the plant by solving \textbf{P}:

\begin{lemma}\vspace{0.1cm}
Suppose that there exists $\theta^* \in \Theta$ such that $\hat{u}_{\theta^*}(x,v)  = u_p(x,v)$ for each $x \in D$ and $v \in V$. Then $\theta^*$ is a globally optimal solution of \textbf{P}.
\end{lemma}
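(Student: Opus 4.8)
The plan is to show that $L(\theta^*) = 0$, which immediately forces $\theta^*$ to be a global minimizer since $L$ is nonnegative. First I would recall that $L(\theta) = \expval_{x \sim X, v \sim V}\norm{v - W_\theta(x,v)}_2^2$ and that $W_\theta(x,v) = b_p(x) + A_p(x)\hat u_\theta(x,v)$ is exactly the $\gamma$-th derivative of the output under application of the learned controller $\hat u_\theta$. The key observation is the defining property of the true linearizing controller $u_p$: by construction (Section \ref{sec:FBL}), applying $u_p(x,v) = A_p^{-1}(x)(-b_p(x) + v)$ to the plant yields $y^{(\gamma)} = v$, i.e. $b_p(x) + A_p(x)u_p(x,v) = v$ for every $x \in D$ and every $v \in \R^q$.

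Next I would combine this with the hypothesis. Since $\hat u_{\theta^*}(x,v) = u_p(x,v)$ for each $x \in D$ and $v \in V$, we get
\begin{equation*}
    W_{\theta^*}(x,v) = b_p(x) + A_p(x)\hat u_{\theta^*}(x,v) = b_p(x) + A_p(x)u_p(x,v) = v
\end{equation*}
for all $x \in D$ and $v \in V$. Hence $\ell(x,v,\theta^*) = \norm{v - W_{\theta^*}(x,v)}_2^2 = 0$ pointwise on $D \times V$. Because $X$ is supported on $D$ and $V$ is supported on the unit ball, the expectation $L(\theta^*) = \expval_{x\sim X, v \sim V}\,\ell(x,v,\theta^*)$ integrates a function that is identically zero on the support of the product measure, so $L(\theta^*) = 0$.

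Finally, since $\ell(x,v,\theta) = \norm{\cdot}_2^2 \geq 0$ for every $(x,v,\theta)$, we have $L(\theta) \geq 0$ for all $\theta \in \Theta$, and therefore $L(\theta^*) = 0 \leq L(\theta)$ for all $\theta \in \Theta$; that is, $\theta^*$ is a globally optimal solution of \textbf{P}. I do not anticipate a serious obstacle here — the only point requiring a little care is making explicit that the property "$u_p$ linearizes the plant" means $b_p(x) + A_p(x)u_p(x,v) = v$ holds for \emph{all} $v$, not just those in the support of $V$, so that the substitution above is valid; this is exactly what was established when $u_p$ was introduced via \eqref{eq:mimo_controller}. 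One might also remark that $\theta^*$ need not be the unique minimizer unless additional (convexity/identifiability) conditions are imposed, which is the subject of the subsequent results.
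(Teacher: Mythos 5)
Your proof is correct and follows essentially the same approach as the paper's: show $L(\theta^*)=0$ using the defining property $b_p(x)+A_p(x)u_p(x,v)=v$ of the exact linearizing controller, and conclude by nonnegativity of $L$. You simply spell out the intermediate steps (the substitution into $W_{\theta^*}$ and the support argument) that the paper's one-line proof leaves implicit.
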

\begin{proof}
Note that if $u_{\theta^*}(x,v) = u_p(x,v)$ for each $x \in X$ and $v \in V$  then $L(\theta) = 0$. Moreover, we clearly have $L(\theta) \geq 0$ for each $\theta \in \Theta$. Thus, $\theta^*$ must be a global minimizer of the optimization \eqref{eq:continuous_opt}. \vspace{0.1cm}
\end{proof}

However, \textbf{P} is generally non-convex meaning that in practice we can only hope to find locally optimal solutions to the problem. Thus, we seek conditions which simplify the structure of the optimization. The standard convergence proofs in the adaptive control literature assume the learned controller is of the form
\begin{equation}\label{eq:lin_param1}
    \beta_{\theta_1}(x) = \sum_{k = 1}^{K_1} \theta_k^1 \beta_{k}(x) \ \  \ \  \ \    \alpha_{\theta_2}(x) = \sum_{k =1}^{K_2} \theta_k^2\alpha_{k}(x)
\end{equation}
where $\set{\beta_k}_{k =1}^{K_1}$
and $\set{\alpha_k}_{k=1}^{K_2}$ are nonlinear continuous functions. When we adopt this structure our optimization becomes convex, whose proof can be found in the Appendix: 

\begin{lemma}\label{lemma:convex1}\vspace{0.1cm}
Assume that $\beta_{\theta_1}$ and $\alpha_{\theta_2}$ are of the form \eqref{eq:lin_param1}. Then \textbf{P} is convex. Moreover, if $\set{\beta_k}_{k=1}^{K_1}$ and $\set{\alpha_k}_{k=1}^{K_2}$ are each linearly independent  then \textbf{P} is strongly convex. 
\vspace{0.1cm}
\end{lemma}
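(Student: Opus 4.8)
The plan is to exploit the key structural consequence of the linear parameterization \eqref{eq:lin_param1}: the learned controller $\hat{u}_\theta(x,v)$ is \emph{affine} in $\theta = (\theta_1,\theta_2)$. Writing
\[
  \hat{u}_\theta(x,v) = \beta_m(x) + \alpha_m(x)v + \sum_{k=1}^{K_1}\theta_k^1\beta_k(x) + \sum_{k=1}^{K_2}\theta_k^2\alpha_k(x)v =: \bar{u}(x,v) + \Phi(x,v)\theta,
\]
where $\Phi(x,v) \in \R^{q\times(K_1+K_2)}$ is the matrix whose columns are $\beta_1(x),\dots,\beta_{K_1}(x),\alpha_1(x)v,\dots,\alpha_{K_2}(x)v$, and since $A_p(x),b_p(x)$ do not depend on $\theta$, the residual $v - W_\theta(x,v) = v - b_p(x) - A_p(x)\hat{u}_\theta(x,v)$ equals $c(x,v) - M(x,v)\theta$ with $M(x,v) := A_p(x)\Phi(x,v)$.

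For convexity I would simply note that $\ell(x,v,\theta) = \norm{c(x,v) - M(x,v)\theta}_2^2$ is the composition of an affine map of $\theta$ with the convex function $\norm{\cdot}_2^2$, hence a convex quadratic in $\theta$ for each fixed $(x,v)$; taking $\expval_{x\sim X,\,v\sim V}$ preserves convexity, and $\Theta = \Theta_1\times\Theta_2$ is convex, so $\textbf{P}$ is convex. For strong convexity, I would read off the Hessian from this quadratic form: $\nabla^2_\theta\ell(x,v,\theta) = 2M(x,v)^\top M(x,v)$, hence
\[
  \nabla^2 L(\theta) = 2\,\expval_{x\sim X,\,v\sim V}\!\big[\Phi(x,v)^\top A_p(x)^\top A_p(x)\,\Phi(x,v)\big] =: 2H,
\]
which is independent of $\theta$. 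Since the plant has a well-defined relative degree on the compact set $D$, its decoupling matrix $A_p$ is continuous and bounded away from singularity on $D$, so $\sigma := \inf_{x\in D}\sigma_{\min}(A_p(x)) > 0$ and $A_p(x)^\top A_p(x) \succeq \sigma^2 I$ there; therefore $H \succeq \sigma^2 G$ with $G := \expval_{x,v}[\Phi(x,v)^\top\Phi(x,v)]$, and it suffices to show $G \succ 0$.

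This last point is the crux, and is where the linear-independence hypotheses enter. If $\theta^\top G\theta = 0$ then $\expval_{x,v}\norm{\Phi(x,v)\theta}_2^2 = 0$; the integrand is nonnegative and continuous (as $\beta_k,\alpha_k$ are continuous), and as a product measure $X\times V$ has support $D\times\bar{B}$ with $\bar{B}$ the closed unit ball in $\R^q$, so $\Phi(x,v)\theta = \sum_k\theta_k^1\beta_k(x) + \sum_k\theta_k^2\alpha_k(x)v = 0$ for every $x\in D$ and $v\in\bar{B}$. Setting $v=0$ and using linear independence of $\set{\beta_k}_{k=1}^{K_1}$ on $D$ gives $\theta_1 = 0$; the remaining identity $\sum_k\theta_k^2\alpha_k(x)v = 0$ for all $v\in\bar{B}$ (which spans $\R^q$) yields $\sum_k\theta_k^2\alpha_k(x) = 0$ on $D$, and linear independence of $\set{\alpha_k}_{k=1}^{K_2}$ gives $\theta_2 = 0$. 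Hence $G \succ 0$, so $\nabla^2 L \succeq 2\sigma^2\lambda_{\min}(G)\,I$ with $\lambda_{\min}(G) > 0$, which is strong convexity. I expect the genuine work to be concentrated in this final paragraph — justifying the passage from a vanishing expectation of a continuous nonnegative integrand to pointwise vanishing on the product of the supports, and then cleanly decoupling the $\beta$- and $\alpha$-blocks so the two linear-independence assumptions can be applied one at a time; the convexity claim itself is immediate once the affine-in-$\theta$ structure is identified.
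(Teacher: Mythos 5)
Your proof is correct and follows essentially the same route as the paper's: exploit that $\hat{u}_\theta$ is affine in $\theta$ so $\ell$ is a convex quadratic, then show the expected Gram matrix $\expval\big[\hat{W}(x,v)^\top\hat{W}(x,v)\big]$ is positive definite by invoking invertibility of $A_p$ and linear independence of the basis functions. Your final paragraph is in fact slightly more careful than the paper's, which asserts without detail that the vanishing of $\sum_k c_k^1\beta_k(x)+\sum_k c_k^2\alpha_k(x)v$ contradicts linear independence, whereas you make the decoupling explicit by setting $v=0$ and then letting $v$ range over a spanning set.
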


Thus, when our learned controller takes the form \eqref{eq:lin_param1} we can reliably use iterative techniques to find its globally optimal solution. The proof of Lemma \ref{lemma:convex1}, which is given in the accompanying technical report, shows that $L$ is actually quadratic in the parameters when the learned controller is of the form \eqref{eq:lin_param1}. The key property being exploited here is that $W_{\theta}(x,v)$ becomes affines in $\theta$ when the controller is linear in the parameters. Our conditions on $X$ and $V$ are analogous to the persistency of excitation results from the adaptive control literature \cite[Chapter 2]{sastry1989adaptive}, and ensure that each component of the learned controller is excited while solving \textbf{P}. This is the underlying reason why the optimization becomes strongly convex when the components of our controller are linearly independent. Taken together, the preceding lemmas imply our main theoretical result: 
\begin{theorem}\vspace{0.1cm}
Suppose that for some $\theta^* \in \Theta$ we have $\hat{u}_{\theta^*}(x,v) = u_p(x,v)$ for each $x \in D$ and $v \in \R^q$. Further assume that the learned controller is of the form \eqref{eq:lin_param1} and that the sets $\set{\beta_k}_{k =1}^{K_1}$
and $\set{\alpha_k}_{k=1}^{K_2}$ are linearly independent. Then $\theta^*$ is the unique global (and local) minimizer of \textbf{P}.
\label{thm:theorem1} \vspace{0.1cm}
\end{theorem}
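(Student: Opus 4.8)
The plan is to assemble the two preceding lemmas, paying attention only to the small gap between "unique global minimizer" and "unique local minimizer." First I would invoke Lemma 1: the hypothesis $\hat{u}_{\theta^*}(x,v) = u_p(x,v)$ for all $x \in D$, $v \in \mathbb{R}^q$ (in particular for $v \in V$) gives $L(\theta^*) = 0$, and since $L \geq 0$ everywhere, $\theta^*$ is a global minimizer of $\textbf{P}$. Next I would invoke Lemma \ref{lemma:convex1}: under the linear parameterization \eqref{eq:lin_param1} together with linear independence of $\set{\beta_k}$ and $\set{\alpha_k}$, the objective $L$ is strongly convex on the convex set $\Theta$. A strongly convex function on a convex set has at most one minimizer, so $\theta^*$ is \emph{the} unique global minimizer.

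For the local statement, the key point is that for a convex function on a convex domain, every local minimizer is a global minimizer: if $\theta'$ were a local minimizer, then for any $\theta \in \Theta$ the segment $(1-t)\theta' + t\theta$ lies in $\Theta$ by convexity, and convexity of $L$ along that segment forces $L(\theta') \leq L(\theta)$ once we know $L(\theta') \leq L((1-t)\theta' + t\theta)$ for small $t > 0$. Hence any local minimizer is global, and by uniqueness of the global minimizer it must equal $\theta^*$. I would also remark that strong convexity makes $\theta^*$ a \emph{strict} local minimizer, which is consistent with it being the only one.

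I do not expect any real obstacle here: the substance was already discharged in Lemma \ref{lemma:convex1} (the quadratic/affine-in-$\theta$ structure of $W_\theta$), and Theorem \ref{thm:theorem1} is essentially a corollary. The only mild care needed is to make sure the hypotheses line up — in particular that the $v \in \mathbb{R}^q$ hypothesis of the theorem implies the $v \in V$ hypothesis of Lemma 1, and that the linear-independence hypothesis is exactly what Lemma \ref{lemma:convex1} needs to upgrade convexity to strong convexity. If one wanted a fully self-contained argument avoiding the phrase "strongly convex," one could instead note directly that $L$ is a quadratic form in $\theta$ whose Hessian is (twice) a Gram-type matrix that is positive definite precisely under the linear-independence assumption, so $L$ has a unique stationary point on $\Theta$, which is $\theta^*$.
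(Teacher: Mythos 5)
Your proposal is correct and follows the same route as the paper, which simply remarks that Theorem \ref{thm:theorem1} follows by combining Lemma 1 (the exact linearizing parameters give $L(\theta^*)=0$, hence a global minimum) with Lemma \ref{lemma:convex1} (strong convexity under the linear parameterization and linear independence, hence uniqueness of the minimizer and coincidence of local with global minimizers). Your added observations about the $v\in\R^q$ versus $v\in V$ hypotheses and the quadratic/Gram-matrix structure are consistent with the paper's appendix argument.
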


There are a number of well-studied bases, such as polynomials or radial basis functions, which can be used to approximate any continuous function to a desired degree of accuracy. Thus, by including enough terms in  $\set{\beta_k}_{k =1}^{K_1}$
and $\set{\alpha_k}_{k=1}^{K_2}$ we can theoretically recover $u_p$ to a pre-specified precision by solving \textbf{P}. However, for high dimensional systems, the number of terms required in such expansions can become prohibitively large. Other architectures, such as multi-layer feed-forward neural networks, yield more compactly represented function approximation schemes but complicate the analysis of \textbf{P}. However, since our approach does not suffer from singularities during the optimization process, in practice we can still use these powerful function approximators to find an improved linearizing controller for high-dimensional systems with unknown dynamics. We next demonstrate this point by discussing how policy optimization algorithms can be used to solve a discrete-time approximation to \textbf{P}.

\begin{figure*}[h!]
\centering
\includegraphics[width=0.95\textwidth]{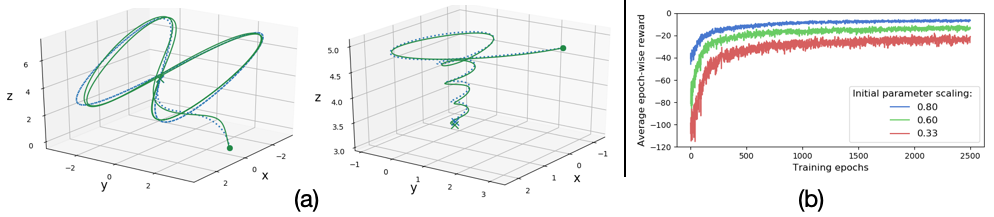}
\caption{(a) The performance of our learned linearizing controller on two high-performance quadrotor tracking tasks when the parameters of the inital dynamics model are scaled by a factor of $0.6$.  The first task is a figure-eight, and the second is a corkscrew maneuver. In both maneuvers the quadrotor also tracks an oscillating reference in the yaw angle. In both figures, the green trajectory is the one taken by the learned controller and the blue trajectory is that of the true linearizing controller for the system. In both cases, the learned controller closely matches the desired behavior. (b) Learning curves for learned linearizing controllers which are instantiated with nominal dynamics models with three levels of scaling of the true dynamics parameters. }
\label{fig:quad}
\vspace{-0.6cm}
\end{figure*}

\subsection{Discrete-time approximations and reinforcment learning}\label{sec:RL_problem}
While the theory and optimization problem we developed in the previous section are in continuous time, many real world plants have actuators which can only be updated at a fixed sampling frequency. Thus in this section we formulate a discrete time approximation to \textbf{P} which we cast as a canonical reinforcement learning problem. Unfortunately, the analysis of linearizable systems in the sampled-data setting becomes significantly more complex \cite{grizzle1988feedback}, which is why the majority of the literature on adaptive control for linearizable systems is formulated in continuous time. 

Letting $\Delta t>0$ denote the sampling rate, we set $t_k = k\Delta t$ for each $k \in \N$. Next, letting $x(\cdot)$ denote the state trajectory of the plant, we then denote $x_k = x(t_k)$ and let $u_k$ denote the control applied on the interval $[t_k, t_{k+1})$. This yields the following difference equation for the dynamics of the plant:
\begin{equation*}
x_{k+1} =x_{k} +  \underbrace{\int_{t_k}^{t_{k+1}}f_p(x(t)) + g_p(x(t))u_k d\tau}_{F(x_k,u_k)}
\end{equation*}
The map $F_p \colon \R^n \times \mathbb{R}^q \to \mathbb{R}^n$ will generally no longer be affine in the control. Similarly, we let $\xi(\cdot) = (y_1(\cdot), \dots, y_1^{(\gamma_1 -1)}(\cdot), \dots,y_q(\cdot), \dots y_q^{(\gamma_q -1)}(\cdot))$ denote the solution to the linearized portion of the state, and set $\xi_k = \xi(t_k)$ for each iterate. Now, if we integrate \eqref{eq:learned_output_dyn} over the interval $[t_k, t_{k+1})$ we obtain a difference equation for the outputs of the form
\begin{equation} \label{eq:dt_io}
\xi_{k+1} = e^{A\Delta t}\xi_k + H(x_k, u_k)
\end{equation}
where again the mapping $H \colon \mathbb{R}^n \times \R^q \to \R^{\gamma}$ will generally no longer be affine in $u_k$. This is the primary hurdle to applying the theory developed in the previous section to the current setting. Nevertheless, we can still attempt to enforce an approximate linear relationship between a virtual input $v_k$ and successive iterates of the outputs. Letting  $\bar{A} = e^{A\Delta t}$ and $\bar{B} = \int_{0}^{\Delta t}e^{At}B dt$, the sampled-data version of the reference model \eqref{eq:reference} becomes
\begin{equation*}
    \xi_{k+1} = \bar{A}\xi_k + \bar{B}v_k,
\end{equation*}
which is the ideal linear behavior we would like to enforce by applying the control $u_k = \hat{u}_\theta(x_k,v_k)$.  To encourage actions which better track the discrete-time reference model, we apply the point-wise loss $\bar{\ell} \colon \R^n \times \R^q \times \R^q \to \R$ where
\begin{equation*}
    \bar{\ell}(x_k, u_k, v_k) = \| \bar{B}v_k - H(x_k, u_k) \|_2^2.
\end{equation*}
We can calculate this quantity using \eqref{eq:dt_io} and by observing $\xi_k$ and $\xi_{k+1}$, which can each be calculated by numerically differentiating the outputs from the plant. This loss provides a measure of how well the control $u_k$ enforces the desired change in the state of the linear system (as specified by $v_k$) at the state $x_k$. We then define the following reinforcement learning problem over the parameters of $\hat{u}_{\theta}$:
\begin{align*}
\min_{\theta \in \Theta} \mathbb{E}_{x_0 \sim X, v_k \sim V, w_k \sim \mathcal{N}(0, \sigma_w^2)} &\left[\sum_{k=1}^N \bar{\ell}(x_k, u_k, v_k)\right] \\
 \text{subject to: }  \ \ \ x_{k+1} &= x_k +  F(x_k, u_k), \ \ \ x_0 = x_0 \\ 
 u_k &= \hat{u}_\theta(x_k, v_k) + w_k
\end{align*}
Here, we sample initial conditions for the problem using our desired state distribution $X$ and sample inputs to the linear system according to $V$ at each time step. The zero mean added noise $w_k$ is used to encourage exploration, and to make the effects of the policy random. Finally, $N$ is the length of the training episodes. This problem can be solved to local optimality using standard reinforcement learning algorithms~\cite{SuttonBarto,TRPO,PPO,DDPO} and by running experiments on the real-world hardware to evaluate $\bar{l}(x_k,u_k,v_k)$. While it is not immediately clear how to extend the theoretical results from Section \ref{sec:continuous_opt} to the present case, we intend to address this issue in a forthcoming article.


\section{Examples}\label{sec:examples}

We now use our approach to learn feedback linearizing policies for three different systems and use the learned policies to construct tracking controllers as in \cite[Theorem 9.14]{sastry2013nonlinear}. In all cases, the input to the parameterized policy replaces all angles with their sine and cosine. 
\subsection{Simulations}

\subsubsection{Double pendulum with polynomial policies}
\label{sec:dp}
We first test our approach on a fully actuated double pendulum with state $x = [\theta_1, \theta_2, \omega_1, \omega_2]^T$, output $y = [\theta_1, \theta_2]^T$, where
$\theta_1$ and $\theta_2$ represent the angles of the two joints, with angular rates $\omega_1$ and $\omega_2$ respectively. The system has two inputs $u_1$ and $u_2$ that control the torque at both joints. Although the system is relatively low dimensional and fully actuated, it is highly nonlinear \cite{shinbrot1992chaos}. 

For the learning problem, we give the nominal model inaccurate estimates for both the length and mass of each of the arms. Specifically, we scale each of the parameters to $1/2$ their true values. The learned controller is comprised of 150 radial basis functions, which are centered randomly throughout the state-space. The learned controller is linear in its parameters, so as to agree with Theorem~\ref{thm:theorem1}. We use the REINFORCE algorithm \cite{SuttonBarto}, and baseline state-value estimates with the average reward over all states. At each iteration (or epoch) we collect $50$ rollouts of $0.25$ seconds each, and we train for 500 epochs. Figure~\ref{fig:double_pendulum} (a) presents the result of using our learned controller to track a desired sinusoidal trajectory, and compares it to the performance of the exact linearizing controller for the system. The difference between the performance our learned controller and the theoretical ideal observed to be very small. We do not plot the trajectory generated by the nominal model-based controller, since it immediately diverges from the desired behavior. Similar results were observed using the same learned controller to track other desired trajectories for the system. The linear feedback matrix used in the tracking controller was obtained by solving an infinite horizon LQR problem where the state deviation was penalized 10 times more than the magnitude of the input.

\subsubsection{14D quadrotor with neural network policies}
Our second simulation environment uses the quadrotor model and feedback linearization controller proposed in \cite{al2009quadrotor}, which makes use of dynamic extension \cite{sastry2013nonlinear}. In particular, the states for the model are $(x,y,z,\psi,\theta,\phi, \dot{x}, \dot{y}, \dot{z}, p,q,r, \xi,\zeta)$ where $x$, $y$ and $z$ are the Cartesian coordinates of the quadrotor, and $\psi$, $\theta$ and $\phi$ represent the roll, pitch and yaw of the quadrotor, respectively. The next six states represent the time derivatives of these state: $\frac{d}{dt}(x,y,z,\psi,\theta,\phi) = (\dot{x}, \dot{y}, \dot{z}, p,q,r)$. Finally, $\xi$ and $\zeta$ are the extra states obtained from the dynamic extension procedure. The outputs for the model are the $x$, $y$, $z$ and $\psi$ coordinates.

The inaccurate nominal dynamics model was constructed by multiplying the mass and inertial constants of the true system by factors of $0.33$, $0.6$ and $0.8$ for different experiments. The trained policies were feed-forward neural networks with $\tanh$ activations with $2$ hidden layers of width $64$. For each training epoch, 50 rollouts of length 25 were collected and the parameters were updated using PPO. We trained both policies for 2500 epochs. Figure~\ref{fig:quad} (b) illustrates how a better initial dynamics model leads to faster learning of an accurate linearizing controller. In particular, the reward-per-epoch is plotted for policies trained with the three scaling factors indicated above. We observe that worse initial models result in worse policy performance, given the same network architecture and training time.  Figure~\ref{fig:quad} (a) demonstrates the ability of the learned controller to overcome significant model mismatch (scaling factor of $0.6$) to match the desired linear tracking behavior.

\subsection{Robotic experiment: 7-DOF manipulator arm}
\label{subsec:baxter}

We also evaluate our approach in hardware, on a 7-DOF Baxter robot arm.
The dynamics of this 14-dimensional system are extremely coupled and nonlinear.
Taking the 7 joint angles as output $y$, however, the system is input-output linearizable with relative degree two.
We use the system measurements (i.e., masses, link lengths, etc.) provided with Baxter's pre-calibrated URDF \cite{robotics2013baxter} and the OROCOS Kinematics and Dynamics Library (KDL) \cite{bruyninckx2001open} to compute a nominal feedback linearizing control law.

This nominal controller suffers from several inaccuracies.
First, Baxter's actuators are series-elastic, meaning that each joint contains a torsion spring \cite{williams2017baxter} which is unmodeled, and the URDF itself may not be perfectly accurate.
Second, the OROCOS solver is numerical, which can lead to errors in computing the decoupling matrix and drift term.
Finally, our control architecture is implemented in the Robot Operating System \cite{quigley2009ros}, which can lead to minor timing inconsistency.

We use the PPO algorithm to tune the parameters of a $128 \times 2$ neural network with $\tanh$ activations for the learned component of the controller. For each training epoch, 1250 rollouts of one timestep (0.05 s) each were collected. We trained for 100 epochs, which took 104 minutes.
Figure~\ref{fig:double_pendulum} (b) summarizes typical results on tracking a square wave reference trajectory for each joint angle with period 5s. The nominal feedback linearized model from OROCOS has significant steady-state error. Our learned approach significantly reduces, but does not eliminate, this error. 
We conjecture the remaining error is a sign that the (relatively small) neural network may not be sufficiently expressive. 

\section{Discussion}\label{sec:discussion}
While the methods we have presented avoid some of the challenges model-based methods face when trying to construct a linearizing controller for an unknown plant, we feel the techniques presented here should be used to complement model-based design approaches rather than replace them. As we observed empirically, a reasonable (though ultimately inaccurate) model can be used to provide a better starting point for the learning process. Thus, we feel that in practice our approach should be used primarily to overcome difficult to model non-linearities, and should be used in combination with a simple nominal dynamics models with parameters which are easily identified. Investigating the performance of different learning algorithms in real-world settings merits significant future research effort. On the theoretical side, we feel the general way in which we constructed the optimization problem in \ref{sec:continuous_opt} provides a foundation for combining model-free policy optimization techniques with geometric control architectures. Future work will investigate how to optimize over different classes of controllers by extending the approach presented here.

\appendix
\subsection{Proof of Lemma \ref{lemma:convex1}}
First, we rearrange \eqref{eq:learned_output_dyn} into the form
\begin{multline}
    y_p^\gamma =  b_p(x) + A_p(x)\beta_m(x) + A_p(x) \alpha_m(x)v  \\ + \sum_{i =1}^{K_1}\theta_k^1 A_p(x)\beta_k(x) + \sum_{k =1}^{K_2}\theta_k^2 A_p(x)\alpha_k(x)v
\end{multline}
to separate out the portions that depend on $\theta$. This can be further condensed by putting $y^\gamma = \bar{W}(x,v) + \hat{W}(x,v)\theta$
where we set
\begin{equation*}
\bar{W}(x,v) = b_p(x) +A_p(x)\beta_m(x) + A_p(x)\alpha_m(x)v
\end{equation*}
\begin{multline*}
\hat{W}(x,v)=A_p(x) [\beta_1(x), \dots, \beta_{K_1}(x), A_1(x)v, \\  \dots, A_{K_2}(x)v ]
\end{multline*}
Letting $c(x,v) = (v- \bar{W}(x,v))$, we can rewrite
\begin{align*}
    \ell(x,v,\theta) &= \big( c(x,v) - \hat{W}(x,v) \theta \big)^T\big(c(x,v) -  \hat{W}(x,v) \theta \big)\\
    &= \theta^T\hat{W}^T(x,v)\hat{W}(x,v)\theta  - 2\theta \hat{W}(x,v) c(x,v) \\ & \ \ \ \ \ \  \ \ \ \ \ \ \ \ \ \ \ \ \ \ \ +c(x,v)^T c(x,v)
\end{align*}
From here we observe that $L(\theta) = \theta^T W \theta + \theta^T F + d$ where $W = \expval_{x \sim X, v \sim V} \hat{W}(x,v)^T \hat{W}(x,v)$ is a positive semi-definite matrix, $F = \expval_{x \sim X, v \sim V}\hat{W}(x,v) c(x,v)$ and $d = \expval_{x \sim X, v \sim V} c(x,v)^Tc(x,v)$. Thus, recalling that $\Theta$ is assumed to be a convex set, we see that \eqref{eq:continuous_opt} is a convex optimization problem which will be strictly convex  if, and only if, $W$ is positive definite. Letting $w^1_k\colon (x,v) \to A_p(x)\beta_k(x)$ and $w^2_{k} \colon (x,v) \to A_p(x)\alpha_k(x)v$,
we see that $W$ is nothing but the Grammian of the set $\Omega = \set{w_1^1, \dots, w_{K_1}^1,\omega^2_1, \dots, w_{K_2}^2}$ on $C(D\times B_v, \R^q)$ with respect to an inner product which is weighted by the distributions $X$ and $V$. Thus, $W$ will be positive definite if and only if $\Omega$ is linearly independent on $C(D \times B_v, \R^q)$. For the sake of contradiction assume that $\Omega$ is not linearly independent. Then there exists scalars $\set{c_k^1}_{k=1}^2$ and $\set{c_{k}^2}_{k=1}^{K_2}$ such that for each $x \in D$ and $v \in V$
 \begin{equation}
     \sum_{k =1}^{K_1} c_k^1\omega_k^1(x,v) + \sum_{k=1}^{K_2}c_k^2\omega_k^2(x,v) = 0
 \end{equation}
Since we know that $A_p(x)$ is invertible for each $x \in D$, this statement is equivalent to 
\begin{equation}
   \sum_{k =1}^{K_1} c_k^1\beta_k(x) + \sum_{k=1}^{K_2}c_k^2\alpha_k(x)v = 0.
\end{equation}
holding for each $x \in D$ and $v \in V$. However, it is not difficult to see that this condition is ruled out in the case that $\set{\beta_k}_{k =1}^{K_1}$ and $\set{\alpha_k}_{k=1}^{K_2}$ are linearly independent sets.

\bibliographystyle{IEEEtran}
\bibliography{refs.bib}

\begin{thebibliography}{10}
\providecommand{\url}[1]{#1}
\csname url@samestyle\endcsname
\providecommand{\newblock}{\relax}
\providecommand{\bibinfo}[2]{#2}
\providecommand{\BIBentrySTDinterwordspacing}{\spaceskip=0pt\relax}
\providecommand{\BIBentryALTinterwordstretchfactor}{4}
\providecommand{\BIBentryALTinterwordspacing}{\spaceskip=\fontdimen2\font plus
\BIBentryALTinterwordstretchfactor\fontdimen3\font minus
  \fontdimen4\font\relax}
\providecommand{\BIBforeignlanguage}[2]{{%
\expandafter\ifx\csname l@#1\endcsname\relax
\typeout{** WARNING: IEEEtran.bst: No hyphenation pattern has been}%
\typeout{** loaded for the language `#1'. Using the pattern for}%
\typeout{** the default language instead.}%
\else
\language=\csname l@#1\endcsname
\fi
#2}}
\providecommand{\BIBdecl}{\relax}
\BIBdecl

\bibitem{sastry2013nonlinear}
S.~Sastry, \emph{Nonlinear systems: analysis, stability, and control}.\hskip
  1em plus 0.5em minus 0.4em\relax Springer Science \& Business Media, 1999,
  vol.~10.

\bibitem{isidori2013nonlinear}
A.~Isidori, \emph{Nonlinear control systems}.\hskip 1em plus 0.5em minus
  0.4em\relax Springer Science \& Business Media, 2013.

\bibitem{bertsekas1996neuro}
D.~P. Bertsekas and J.~N. Tsitsiklis, \emph{Neuro-dynamic programming}.\hskip
  1em plus 0.5em minus 0.4em\relax Athena Scientific, 1996.

\bibitem{sutton2018reinforcement}
R.~S. Sutton and A.~G. Barto, \emph{Reinforcement learning: An introduction},
  2018.

\bibitem{sutton2000policy}
R.~S. Sutton, D.~A. McAllester, S.~P. Singh, and Y.~Mansour, ``Policy gradient
  methods for reinforcement learning with function approximation,'' in
  \emph{Advances in neural information processing systems}, 2000, pp.
  1057--1063.

\bibitem{schulman2015trust}
J.~Schulman, S.~Levine, P.~Abbeel, M.~Jordan, and P.~Moritz, ``Trust region
  policy optimization,'' in \emph{International conference on machine
  learning}, 2015, pp. 1889--1897.

\bibitem{lillicrap2015continuous}
T.~P. Lillicrap, J.~J. Hunt, A.~Pritzel, N.~Heess, T.~Erez, Y.~Tassa,
  D.~Silver, and D.~Wierstra, ``Continuous control with deep reinforcement
  learning,'' \emph{arXiv preprint arXiv:1509.02971}, 2015.

\bibitem{schulman2017proximal}
J.~Schulman, F.~Wolski, P.~Dhariwal, A.~Radford, and O.~Klimov, ``Proximal
  policy optimization algorithms,'' \emph{arXiv preprint arXiv:1707.06347},
  2017.

\bibitem{martin2003flat}
P.~Martin, R.~M. Murray, and P.~Rouchon, ``Flat systems, equivalence and
  trajectory generation,'' 2003.

\bibitem{kalman1960contributions}
R.~E. Kalman \emph{et~al.}, ``Contributions to the theory of optimal control,''
  \emph{Bol. soc. mat. mexicana}, vol.~5, no.~2, pp. 102--119, 1960.

\bibitem{borrelli2017predictive}
F.~Borrelli, A.~Bemporad, and M.~Morari, \emph{Predictive control for linear
  and hybrid systems}.\hskip 1em plus 0.5em minus 0.4em\relax Cambridge
  University Press, 2017.

\bibitem{grizzle2001asymptotically}
J.~W. Grizzle, G.~Abba, and F.~Plestan, ``Asymptotically stable walking for
  biped robots: Analysis via systems with impulse effects,'' \emph{IEEE
  Transactions on automatic control}, vol.~46, no.~1, pp. 51--64, 2001.

\bibitem{ames2014rapidly}
A.~D. Ames, K.~Galloway, K.~Sreenath, and J.~W. Grizzle, ``Rapidly
  exponentially stabilizing control lyapunov functions and hybrid zero
  dynamics,'' \emph{IEEE Transactions on Automatic Control}, vol.~59, no.~4,
  pp. 876--891, 2014.

\bibitem{mellinger2011minimum}
D.~Mellinger and V.~Kumar, ``Minimum snap trajectory generation and control for
  quadrotors,'' in \emph{2011 IEEE International Conference on Robotics and
  Automation}.\hskip 1em plus 0.5em minus 0.4em\relax IEEE, 2011, pp.
  2520--2525.

\bibitem{sastry2011adaptive}
S.~Sastry and M.~Bodson, \emph{Adaptive control: stability, convergence and
  robustness}.\hskip 1em plus 0.5em minus 0.4em\relax Courier Corporation,
  1989.

\bibitem{craig1987adaptive}
J.~J. Craig, P.~Hsu, and S.~S. Sastry, ``Adaptive control of mechanical
  manipulators,'' \emph{The International Journal of Robotics Research},
  vol.~6, no.~2, pp. 16--28, 1987.

\bibitem{sastry1989adaptive}
S.~S. Sastry and A.~Isidori, ``Adaptive control of linearizable systems,''
  \emph{IEEE Transactions on Automatic Control}, vol.~34, no.~11, pp.
  1123--1131, 1989.

\bibitem{nam1988model}
K.~Nam and A.~Araposthathis, ``A model reference adaptive control scheme for
  pure-feedback nonlinear systems,'' \emph{IEEE Transactions on Automatic
  Control}, vol.~33, no.~9, pp. 803--811, 1988.

\bibitem{kanellakopoulos1991systematic}
I.~Kanellakopoulos, P.~V. Kokotovic, and A.~S. Morse, ``Systematic design of
  adaptive controllers for feedback linearizable systems,'' in \emph{1991
  American Control Conference}.\hskip 1em plus 0.5em minus 0.4em\relax IEEE,
  1991, pp. 649--654.

\bibitem{umlauft2017feedback}
J.~Umlauft, T.~Beckers, M.~Kimmel, and S.~Hirche, ``Feedback linearization
  using gaussian processes,'' in \emph{2017 IEEE 56th Annual Conference on
  Decision and Control (CDC)}.\hskip 1em plus 0.5em minus 0.4em\relax IEEE,
  2017, pp. 5249--5255.

\bibitem{chowdhary2014bayesian}
G.~Chowdhary, H.~A. Kingravi, J.~P. How, and P.~A. Vela, ``Bayesian
  nonparametric adaptive control using gaussian processes,'' \emph{IEEE
  Transactions on Neural Networks and Learning Systems}, vol.~26, no.~3, pp.
  537--550, 2014.

\bibitem{chowdhary2013bayesian}
------, ``Bayesian nonparametric adaptive control of time-varying systems using
  gaussian processes,'' in \emph{2013 American Control Conference}.\hskip 1em
  plus 0.5em minus 0.4em\relax IEEE, 2013, pp. 2655--2661.

\bibitem{spooner1996stable}
J.~T. Spooner and K.~M. Passino, ``Stable adaptive control using fuzzy systems
  and neural networks,'' \emph{IEEE Transactions on Fuzzy Systems}, vol.~4,
  no.~3, pp. 339--359, 1996.

\bibitem{chen1995adaptive}
F.-C. Chen and H.~K. Khalil, ``Adaptive control of a class of nonlinear
  discrete-time systems using neural networks,'' \emph{IEEE Transactions on
  Automatic Control}, vol.~40, no.~5, pp. 791--801, 1995.

\bibitem{yesildirek1994feedback}
A.~Yesildirek and F.~L. Lewis, ``Feedback linearization using neural
  networks,'' in \emph{Proceedings of 1994 IEEE International Conference on
  Neural Networks (ICNN'94)}, vol.~4.\hskip 1em plus 0.5em minus 0.4em\relax
  IEEE, 1994, pp. 2539--2544.

\bibitem{kosmatopoulos1999switching}
E.~B. Kosmatopoulos and P.~A. Ioannou, ``A switching adaptive controller for
  feedback linearizable systems,'' \emph{IEEE Transactions on automatic
  control}, vol.~44, no.~4, pp. 742--750, 1999.

\bibitem{kosmatopoulos2002robust}
------, ``Robust switching adaptive control of multi-input nonlinear systems,''
  \emph{IEEE transactions on automatic control}, vol.~47, no.~4, pp. 610--624,
  2002.

\bibitem{bechlioulis2008robust}
C.~P. Bechlioulis and G.~A. Rovithakis, ``Robust adaptive control of feedback
  linearizable mimo nonlinear systems with prescribed performance,'' \emph{IEEE
  Transactions on Automatic Control}, vol.~53, no.~9, pp. 2090--2099, 2008.

\bibitem{umlauft2019feedback}
J.~Umlauft and S.~Hirche, ``Feedback linearization based on gaussian processes
  with event-triggered online learning,'' \emph{IEEE Transactions on Automatic
  Control}, 2019.

\bibitem{grizzle1988feedback}
J.~Grizzle and P.~Kokotovic, ``Feedback linearization of sampled-data
  systems,'' \emph{IEEE Transactions on Automatic Control}, vol.~33, no.~9, pp.
  857--859, 1988.

\bibitem{SuttonBarto}
R.~S. Sutton and A.~G. Barto, \emph{Introduction to Reinforcement Learning},
  1st~ed.\hskip 1em plus 0.5em minus 0.4em\relax Cambridge, MA, USA: MIT Press,
  1998.

\bibitem{TRPO}
J.~Schulman, S.~Levine, P.~Abbeel, M.~Jordan, and P.~Moritz, ``Trust region
  policy optimization,'' in \emph{International Conference on Machine
  Learning}, 2015, pp. 1889--1897.

\bibitem{PPO}
J.~Schulman, F.~Wolski, P.~Dhariwal, A.~Radford, and O.~Klimov, ``Proximal
  policy optimization algorithms,'' \emph{CoRR}.

\bibitem{DDPO}
D.~Silver, G.~Lever, N.~Heess, T.~Degris, D.~Wierstra, and M.~Riedmiller,
  ``Deterministic policy gradient algorithms,'' in \emph{Proceedings of the
  31st International Conference on Machine Learning}, ser. Proceedings of
  Machine Learning Research, 2014, pp. 387--395.

\bibitem{shinbrot1992chaos}
T.~Shinbrot, C.~Grebogi, J.~Wisdom, and J.~A. Yorke, ``Chaos in a double
  pendulum,'' \emph{American Journal of Physics}, vol.~60, no.~6, pp. 491--499,
  1992.

\bibitem{al2009quadrotor}
S.~A. Al-Hiddabi, ``Quadrotor control using feedback linearization with dynamic
  extension,'' in \emph{2009 6th International Symposium on Mechatronics and
  its Applications}.\hskip 1em plus 0.5em minus 0.4em\relax IEEE, 2009, pp.
  1--3.

\bibitem{robotics2013baxter}
R.~Robotics, ``Baxter,'' \emph{Retrieved Jan}, vol.~10, p. 2014, 2013.

\bibitem{bruyninckx2001open}
H.~Bruyninckx, ``Open robot control software: the orocos project,'' in
  \emph{Proceedings 2001 ICRA. IEEE international conference on robotics and
  automation (Cat. No. 01CH37164)}, vol.~3.\hskip 1em plus 0.5em minus
  0.4em\relax IEEE, 2001, pp. 2523--2528.

\bibitem{williams2017baxter}
R.~L. Williams~II, ``Baxter humanoid robot kinematics{\copyright} 2017 dr. bob
  productions robert l. williams ii, ph. d., williar4@ ohio. edu mechanical
  engineering, ohio university, april 2017,'' 2017.

\bibitem{quigley2009ros}
M.~Quigley, K.~Conley, B.~P. Gerkey, J.~Faust, T.~Foote, J.~Leibs, R.~Wheeler,
  and A.~Y. Ng, ``Ros: an open-source robot operating system,'' in \emph{ICRA
  Workshop on Open Source Software}, 2009.

\end{thebibliography}

\end{document}